\newtheorem{thm}{Theorem}[section]
\newtheorem{prop}[thm]{Proposition}
\newtheorem{lem}[thm]{Lemma}
\newtheorem{cor}[thm]{Corollary}
\theoremstyle{definition}
\newtheorem{ex}[thm]{Example}
\newtheorem{rmk}[thm]{Remark}
\newtheorem{conv}{Convention}
\newtheorem*{nota*}{Notation}
\newcommand{\Z}{\mathbb Z}
\newcommand{\Pp}{\mathbb P}
\newcommand{\bM}{\overline{\mathcal M}}
\newcommand{\ualpha}{\underline\alpha}
\newcommand{\uepsilon}{\underline\varepsilon}
\newcommand{\ueta}{\underline \eta}
\newcommand{\lne}{\text{line}}
\newcommand{\conic}{\text{conic}}
\newcommand{\pt}{\text{pt}}
\newcommand{\fG}{{\mathfrak G}}
\newcommand{\vir}{\text{vir}}
\newcommand{\ev}{\operatorname{ev}}
\newcommand{\HH}{\mathfrak H}
\newcommand{\g}{\text{g}}
\newcommand{\tanglog}{K_{\text{log}}}
\title[WDVV AND ITS APPLICATION TO RELATIVE GW-THEORY]
{Witten--Dijkgraaf--Verlinde--Verlinde EQUATION AND ITS APPLICATION TO RELATIVE GROMOV--WITTEN THEORY}
\author{H.~Fan}
\email{honglu.fan@math.ethz.ch}
\author{L.~Wu}
\email{longting.wu@math.ethz.ch}
\begin{document}

\maketitle

\begin{abstract}
We derive a recursive formula for certain relative Gromov--Witten invariants with a maximal tangency condition via the Witten--Dijkgraaf--Verlinde--Verlinde equation. For certain relative pairs, we get explicit formulae of invariants using the recursive formula.

\end{abstract}


\section{Introduction}

In 1990's, one of the motivation to develop Gromov--Witten theory (GW-theory for short) is to solve enumerative problems of curves. For genus-$0$ GW-theory, the associativity of quantum cohomology, which is equivalent to the Witten--Dijkgraaf--Verlinde--Verlinde (WDVV) equation, led to Kontsevich's solution to the classical problem of counting degree $d$ rational curves passing through $3d-1$ general points in $\Pp^2$.

Similarly, we consider the enumerative problem of counting degree $d$ rational curves passing through $2d-1$ general points in $\Pp^2$ and having order $d$ with a given line at a given point.
In \cite{T} and \cite{RW}, they both showed that such an enumerative problem enjoys a similar recursion as the Kontsevich's formula. But their methods of deriving their recursion are totally different from Kontsevich's. 

In \cite{T}, Takahashi's work was based on Gathmann's definition of relative GW-invariants in genus $0$ (\cite{Ga}) and explicit calculations. In particular, Takahashi obtained recursions for $\Pp^2$ relative to a line and $\Pp^2$ relative to a conic. As for \cite{RW}, Reineke and Weist's work was based on the correspondence between relative  GW-invariants and quiver Donaldson--Thomas invariants. In fact, they obtained an explicit formula for the above enumerative problem via the correspondence and some techniques in the quiver side.

In this paper, we recover and generalize their results. We show that those recursive formulas in \cite[Theorem 3.1]{T} (with $k=1$) and \cite[Remark 1.6]{RW} can be generalized by applying the WDVV equation of relative GW-theory. The WDVV equation for the relative GW-theory is introduced in \cite{FWY}, where we enlarge the genus-0 relative GW-theory by defining relative GW-invariants with negative contact orders. Combining the recursive formula with some combinatorial methods, we also derive explicit formulae for relative pairs $(\Pp^2,\lne)$ and $(\Pp^2, \conic)$.

We remark that in \cite{B}, Reineke and Weist's result on the correspondence between relative GW-invariants and quiver Donaldson--Thomas invariants is extended to more general cases. But it is still unclear whether our recursive formula can be derived via such generalized correspondence.

From another aspect, relative invariants with maximal contact orders are closely related to local invariants by \cite{HMT}. It is natural to ask whether WDVV in local theory can be converted to WDVV in relative theory. After a conversation with Michel van Garrel and Tom Graber, we realize that the answer is affirmative, except that a slight improvement of the statement of \cite[Theorem 1.1]{HMT} needs to be made. Instead of mapping $\bM_{0,(d)}(X(\text{log}D),\beta)$ to $\bM_{0,0}(X,\beta)$, we map to $\bM_{0,1}(X,\beta)$ by remembering the maximal contact point. And the factor $d$ needs to be modified accordingly into the pullback class of $D$ along the evaluation map. 

\subsection{Main results}
The main results of this paper can be summarized as follows.

Let $X$ be a projective surface and $D$ be a smooth divisor in $X$. Let us denote $K_X+D$ by $\tanglog$. Roughly speaking, the goal is to compute the number of rational curves in $X$ with curve class $\beta$ which pass through $-\tanglog\cdot \beta-1$ general points and intersect $D$ at a \emph{given} point with contact order $D\cdot \beta$. We denote such numbers as $\bar{N}_{\beta}^{X/D}$ (see \eqref{def:fixed} for a more precise definition). We also consider similar numbers $N_{\beta}^{X/D}$ which count curves intersecting $D$ at an \emph{unspecified} point (see also \eqref{def:nfixed}).
These two invariants turn out to be related by Lemma \ref{lem:move2fix}. More precisely, we have
\[N_{\beta}^{X/D}=(D\cdot\beta)\bar{N}_{\beta}^{X/D}\]
if $-\tanglog\cdot\beta>0$. We then use the WDVV equation to show that $N_{\beta}^{X/D}$ satisfies the following recursion:
\begin{thm}[= Theorem \ref{thm:recurMax}]
Using notations as above, we further assume that $D$ is ample and $-\tanglog\cdot\beta\geq 3$. Then we have 
\begin{equation*}
    \begin{split}
        &(H\cdot D)\frac{N_\beta^{X/D}}{d}= \\
        & \sum\limits_{\beta_1+\beta_2=\beta} \left(d_1^2(H\cdot\beta_2){-\tanglog\cdot\beta-3\choose -\tanglog\cdot\beta_1 -1}+d_1d_2(H\cdot\beta_2){-\tanglog\cdot\beta-3\choose -\tanglog\cdot\beta_1-2} \right.\\
        &-\left. d_1^2(H\cdot\beta_1){-\tanglog\cdot\beta-3 \choose -\tanglog\cdot\beta_1} - d_1d_2(H\cdot \beta_1){-\tanglog\cdot\beta-3 \choose -\tanglog\cdot\beta_1-1}\right)\frac{N_{\beta_1}^{X/D}}{d_1}\frac{N_{\beta_2}^{X/D}}{d_2},
    \end{split}
\end{equation*}
where $H$ is any divisor, $d=D\cdot\beta$, $d_1=D\cdot\beta_1$, $d_2=D\cdot\beta_2$, the summation takes over $\beta_1\neq 0$, $\beta_2\neq 0$ and $*\choose *$ are binomial coefficients.
\end{thm}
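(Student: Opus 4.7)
The strategy is to derive the recursion directly from the WDVV equation for relative Gromov--Witten theory established in \cite{FWY}, following the template of Kontsevich's derivation of the recursion for plane rational curves. The first step is to assemble an auxiliary genus-zero relative invariant in class $\beta$ with four distinguished markings, whose classes are chosen so that the two boundary decompositions of $\bM_{0,4} \cong \Pp^1$ produce, on one side, a multiple of $N_\beta^{X/D}$, and on the other side, a weighted sum over splittings $\beta_1+\beta_2 = \beta$ of products $N_{\beta_1}^{X/D} N_{\beta_2}^{X/D}$. Guided by the form of the stated recursion, the natural choice of the four distinguished markings is: two point insertions $p_1, p_2$, a marking $\sigma$ carrying the divisor class $H$, and a relative marking $\tau$ of maximal contact order $d$ carrying the fundamental class $1_D$; to these one adjoins further interior point insertions to bring the virtual dimension into balance.

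The second step is to apply the FWY WDVV equation to the four distinguished markings $p_1, p_2, \sigma, \tau$. The two boundary decompositions of $\bM_{0,4}$, pulled back to the auxiliary invariant, give equal expressions; each expands via the splitting axiom into a sum over splittings $\beta = \beta_1 + \beta_2$, distributions of the free point insertions between the two sides (this accounts for the binomial coefficients $\binom{-\tanglog\cdot\beta - 3}{\,\cdot\,}$), and a dual basis at the gluing node. In the FWY enlargement the node-basis includes not only the ordinary interior state space but also relative classes of every integer contact order, including negative ones.

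The heart of the proof is to identify and evaluate the surviving boundary contributions. Dimension counting together with the ampleness of $D$ should force the interior node-basis contributions to be either $[\mathrm{pt}]\otimes 1$ or $1\otimes [\mathrm{pt}]$, and should constrain the contact order at a relative node-marking so that, balanced against the maximal contact $d$ at $\tau$, each side's 3-point factor becomes a maximal-contact relative invariant in class $\beta_1$ or $\beta_2$. The weights $d_1^2$ and $d_1 d_2$ in the recursion should arise from the FWY contact-gluing formula combined with the factors $1/d_i$ coming from the relation $N_{\beta_i}^{X/D} = d_i \bar N_{\beta_i}^{X/D}$ of Lemma \ref{lem:move2fix}. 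The asymmetric binomial shifts reflect the location of $\sigma$ and $\tau$ in the boundary stratum: the binomial index decreases by one for each of these distinguished markings that lands on side~1, producing the four separate terms. The divisor equation applied to $\sigma$ produces the prefactors $H\cdot\beta_i$, and the opposing signs come from comparing the two sides of WDVV.

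The principal obstacle is the bookkeeping of the FWY WDVV expansion, in particular verifying that the terms indexed by negative contact orders at the node reorganize into an equation involving only the honest maximal-contact counts $N_{\beta_i}^{X/D}$, and confirming that the contact-gluing weights collect into exactly the coefficients $d_1^2$ and $d_1 d_2$ as stated. A secondary technical issue is reducing each surviving 3-point factor back to $N_{\beta_i}^{X/D}$ or $\bar N_{\beta_i}^{X/D}$; this will rely on the divisor equation, the relation $N = d \bar N$, and dimension arguments to account for any auxiliary insertions introduced by the splitting at the node.
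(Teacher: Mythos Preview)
Your setup is the same as the paper's: apply the WDVV of Proposition~\ref{prop:WDVV} with the four distinguished insertions $[1_D]_d$, $[H]_0$, $[\omega_X]_0$, $[\omega_X]_0$ and $-\tanglog\cdot\beta-3$ further point insertions, then compare the two sides. Two points deserve correction or sharpening.

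First, your claim that the interior node-basis contributions are forced to $[\mathrm{pt}]\otimes 1$ or $1\otimes[\mathrm{pt}]$ is not quite right: they vanish outright. If the splitting node is interior (contact order $0$), the side not carrying $[1_D]_d$ has total contact order $0$, hence curve class $0$ by ampleness of $D$; then either the two point insertions on that side (LHS grouping) or a dimension count against the remaining $-\tanglog\cdot\beta-3$ points (RHS grouping) kills the term. Only the relative node-basis, with $i\neq 0$, contributes, and ampleness forces $i=-d_2<0$ on the side carrying $[1_D]_d$. Since $\dim D=1$ the even part of $H^*(D)$ is spanned by $1_D$ and $\omega_D$, giving exactly two node-classes per splitting; these are what produce the pair of terms with coefficients $d_1^2$ and $d_1d_2$.

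Second, and more substantively, you flag the evaluation of the negative-contact factor $I_{\beta_1}$ as ``the principal obstacle'' but do not name the mechanism. In the paper this is not a generic reorganization: one uses the explicit description of invariants with a single negative contact (Example~\ref{ex:1neg}), which writes $I_{\beta_1}$ as a sum over bipartite graphs of a rubber invariant over $D$ times a disconnected relative invariant of $(X,D)$, weighted by edge multiplicities. The rubber factor is then killed by the same argument that proves Lemma~\ref{lem:move2fix} (project to $\bM_{0,m}(D,\pi_*\beta)$ and use the string equation), forcing the rubber side to have fiber class $0$ and exactly three markings. This leaves a single edge of multiplicity $d_1$, and the surviving relative factor is precisely $N_{\beta_1}^{X/D}$ or $\bar N_{\beta_1}^{X/D}$ after applying the divisor equation to $[H]_0$. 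The edge multiplicity $d_1$ together with Lemma~\ref{lem:move2fix} is what assembles the coefficients $d_1^2$ and $d_1 d_2$; without invoking this rubber collapse you will not close the recursion in terms of $N_{\beta_i}^{X/D}$ alone.
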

We remark that the binomial coefficients $a\choose b$ in the recursion allow the cases $a<b$ or $b<0$. In these cases, we simply set it to be zero. The condition that $D$ is ample is crucial. Without this condition, extra relative GW-invariants may appear and are more complicated to deal with. In this case, we would not get a recursive formula only involving $N_\beta^{X/D}$.

Using the above recursion, we then get explicit formulae for relative GW-invariants of pairs $(\Pp^2,\lne)$ and $(\Pp^2, \conic)$ (see Theorem \ref{thm:closefm} and Corollary \ref{cor:explfm} for more details). For the relative pair $(\Pp^2,\lne)$, it recovers some results in \cite{RW} via a totally different method.

\subsection{Outline of the paper}
The paper is organized as follows. In Section \ref{sec:review}, we provide a brief introduction of relative GW-invariants with negative contact orders and the WDVV equation. In Section \ref{sec:appl}, we use the WDVV equation to derive our main recursion and explicit formulae for relative pairs $(\Pp^2,\lne)$ and $(\Pp^2,\conic)$.

\subsection{Acknowledgment}
We thank Pierrick Bousseau, Rahul Pandharipande, Michel van Garrel and Tom Graber for helpful discussions. We also thank Fenglong You for collaborating on a related project. A special thank to the anonymous referees for pointing out the work of Takahashi which we were not aware of. H. F. is supported by grant ERC-2012-AdG-320368-MCSK and SwissMAP. L. W. is supported by grant ERC-2017-AdG-786580-MACI.

This project has received funding from the European Research Council (ERC) under the European Union’s
Horizon 2020 research and innovation program (grant agreement No. 786580).

\section{Relative GW-Invariants}\label{sec:review}
\subsection{The general theory}
In \cite{FWY}, genus-0 relative GW-invariants with negative contact orders are defined. We briefly outline the definitions in this subsection. We do not provide a detailed account of the whole theory because only special cases in Section \ref{subsec:special} (relative GW-invariants with $0$ and $1$ negative marking) are needed for computations in Section \ref{sec:appl}.

Let $X$ be a smooth projective variety and $D$ a smooth divisor. We define a \emph{topological type} $\Gamma$ to be a tuple $(g,n,\beta,\rho,\vec{\mu})$ where $g,n$ are non-negative integers, $\beta\in H_2(X,\mathbb{Z})$ is a curve class and $\vec{\mu}=(\mu_1,\dotsc,\mu_\rho)\in \Z^\rho$ is a partition of the number $\int_\beta D$. We will focus on $g=0$ case in this paper.

For each $\Gamma$, we associate it with a set $\mathcal B_\Gamma$ of connected bipartite admissible graphs of topological type $\Gamma$(see \cite{FWY}*{Definition 4.8}). A genus-0 bipartite admissible graph is a tuple
\[\fG=(\{\Gamma_i^0\},\Gamma^\infty,I,E,\g=0,b)\in \mathcal B_\Gamma.\]
These notations can be briefly summarized as follows. Each $\Gamma^0_i$ is an admissible graph encoding topological data of relative stable maps to non-rigid targets. $\Gamma^{\infty}$ is a (possibly) disconnected admissible graph encoding topological data of relative stable maps for the pair $(X,D)$. $E$ is the set of edges (which later records the gluing of relative stable maps), and $I, \g, b$ assign marked points, genera and curve classes to components of curves corresponding to vertices in $\bigcup_{\Gamma^0_i} V(\Gamma^0_i)\cup V(\Gamma^\infty)$, respectively. Since we set $g=0$ throughout this paper, the genus assignment $\g$ is a zero map.
We then use the above data to glue moduli spaces as the following:
\[\bM_{\fG} = \prod\limits_{\Gamma^0_i}\bM_{\Gamma^0_i}^\sim(D) \times_{D^{|E|}} \bM_{\Gamma^\infty}^{\bullet}(X,D)\]
where $\bM_{\Gamma^0_i}^\sim(D)$ is the moduli space of relative stable maps to rubber targets (see \cite{FWY}*{Section 2.2}) and $\bM_{\Gamma^\infty}^{\bullet}(X,D)$ is the moduli space of relative stable maps with possibly disconnected domain curves (see \cite{FWY}*{Section 2.1}).

The fiber product identifies evaluation maps according to edges. We have a natural diagram
\begin{equation*}
\xymatrix{
\bM_{\fG} \ar[r]^{} \ar[d]^{\iota} & D^{|E|} \ar[d]^{\Delta} \\
\prod\limits_{\Gamma^0_i}\bM_{\Gamma^0_i}^\sim(D) \times \bM_{\Gamma^\infty}^{\bullet}(X,D) \ar[r]^{} & D^{|E|}\times D^{|E|}
}
\end{equation*}
and a natural virtual class 
\[
[\bM_{\fG}]^{\vir}={\Delta}^![\prod\limits_{\Gamma^0_i}\bM_{\Gamma^0_i}^\sim(D)\times \bM_{\Gamma^\infty}^{\bullet}(X,D)]^{\vir}.
\]

In \cite{FWY}, the key step to define relative GW-invariants with negative contact orders is to construct a cycle $\mathfrak c_\Gamma(X/D)$ in $\bM_{0,n+\rho}(X,\beta)\times_{X^{\rho}}D^{\rho}$. Actually, we first define $\mathfrak c_\Gamma(X/D)$ as the limit of the virtual cycles in orbifold GW-theory. We then define $\mathfrak c_\Gamma(X/D)$ using cycles in relative GW-theory and show that two definitions coincide. In this section, we only give a brief description of our 2nd definition. But we need to keep in mind that the WDVV equation which we will use in this paper (see Proposition \ref{prop:WDVV}) actually follows from our 1st definition.

Firstly, for each graph $\fG\in \mathcal B_\Gamma$, we construct a cycle $C_{\fG}$ in the Chow group of $\prod\limits_{\Gamma^0_i}\bM_{\Gamma^0_i}^\sim(D)\times \bM_{\Gamma^\infty}^{\bullet}(X,D)$. We define
\[\mathfrak c_\Gamma(X/D) = \sum\limits_{\fG \in \mathcal B_\Gamma} \dfrac{1}{|Aut(\fG)|}(\mathfrak t_{\fG})_* ({\iota}^* C_{\fG} \cap [\bM_{\fG}]^{\vir}) \in A_*(\bM_{0,n+\rho}(X,\beta)\times_{X^{\rho}}D^{\rho})\]
where 
\[
\mathfrak t_{\fG}:\bM_{\fG}\rightarrow \bM_{0,n+\rho}(X,\beta)\times_{X^{\rho}}D^{\rho}
\]
is the stabilization map gluing curves according to edges $E$ and contracting bubbles of targets. $Aut(\fG)$ is the automorphism group of the graph $\fG$. The cycle $\mathfrak c_\Gamma(X/D)$ is of pure dimension
\begin{equation}\label{eqn:virdim}
d=\mathrm{dim}(X)-3+\int_{\beta} c_1(T_X(-\mathrm{log} D)) + n + \rho_+
\end{equation}
where $\rho_+$ is the number of positive integers in $\vec{\mu}=(\mu_1,\dotsc,\mu_\rho)$. Using $\mathfrak c_\Gamma(X/D)$, relative GW-invariants with negative contact orders can be defined as follows.

There are evaluation maps from $\bM_\fG$ corresponding to interior markings and relative markings
\begin{align*}
\ev_X=(\ev_{X,1},\ldots,\ev_{X,n}):\bM_\fG&\rightarrow X^n, \\
\ev_D=(\ev_{D,1},\ldots,\ev_{D,\rho}):\bM_\fG&\rightarrow D^\rho.
\end{align*}
There are also evaluation maps
\begin{align*}
\overline{\ev}_X=(\overline\ev_{X,1},\ldots,\overline\ev_{X,n}):\bM_{0,n+\rho}(X,\beta)\times_{X^{\rho}}D^{\rho}&\rightarrow X^n, \\
\overline{\ev}_D=(\overline\ev_{D,1},\ldots,\overline\ev_{D,\rho}):\bM_{0,n+\rho}(X,\beta)\times_{X^{\rho}}D^{\rho}&\rightarrow D^\rho,
\end{align*}
such that $\overline{\ev}_X\circ t_\fG=\ev_X, \overline{\ev}_D\circ t_\fG=\ev_D$. 
Now let
\begin{align*}
    \begin{split}
        \ualpha = (\alpha_1,\ldots,\alpha_n) 
        &\in H^*(X)^{\otimes n},\\ 
        \uepsilon = (\epsilon_1,\ldots,\epsilon_\rho) &\in H^*(D)^{\otimes\rho}
    \end{split}
\end{align*}
be the insertions. 

We define \emph{relative GW-invariant} of topological type $\Gamma$ with insertions $\uepsilon, \ualpha$ via  the following integral over the cycle $\mathfrak c_\Gamma(X/D)$:
\begin{equation}\label{def:relGWI}
\langle \uepsilon \mid \ualpha \rangle_{\Gamma}^{(X,D)} := \displaystyle\int_{\mathfrak c_\Gamma(X/D)} \prod\limits_{j=1}^{\rho} \overline{\ev}_{D,j}^*\epsilon_j\prod\limits_{i=1}^n \overline{\ev}_{X,i}^*\alpha_i.
\end{equation}

\subsection{Special cases}\label{subsec:special}
In this subsection, we provide a detailed account of relative GW-invariants with $0$ or $1$ negative marking, which will be important to our application in the next section. For simplicity, we assume that all the cohomology classes involved in this subsection are even degree.

\begin{ex}[Relative GW-invariants without negative markings]
In this case, our relative GW-cycle is simply the pushforward of the virtual cycle of the moduli space in the sense of \cite{Jun1}(\cite[Example 5.4]{FWY}). Under our notations,
\[\mathfrak c_\Gamma(X/D)=(\mathfrak t_{\fG})_*([\bM_\Gamma(X,D)]^{\vir})\]
where $\fG$ is the graph without vertices $\Gamma^0_i$ ($\{\Gamma_i^0\}=\emptyset$ and $\Gamma^\infty=\Gamma$). $t_{\fG}$ is simply the stabilization map
\[
\mathfrak t_{\fG}:\bM_\Gamma(X,D)\rightarrow \bM_{0,n+\rho}(X,\beta)\times_{X^{\rho}}D^{\rho}.
\]
In particular, relative GW-invariants \eqref{def:relGWI} in this case is the usual relative GW-invariants.
\end{ex}

\begin{ex}[Relative GW-invariants with $1$ negative marking]\label{ex:1neg}
We have shown in \cite[Example 5.5]{FWY} that, in this case only those graphs $\fG=(\{\Gamma_i^0\},\Gamma^\infty,I,E,\g=0,b)$
such that $\{\Gamma_i^0\}$ consists of only one element (denoted by $\Gamma^0$) can appear. 

Those graphs form a subset of $\mathcal B_\Gamma$. We denote it as $\mathcal B_{\Gamma}'$. For each $\fG\in B_{\Gamma}'$, it is shown that $C_{\fG}=\prod_{e\in E}d_e$, where $d_e$ is the multiplicity associated to the edge $e$.
So 
\[\mathfrak c_\Gamma(X/D) = \sum\limits_{\fG \in \mathcal B_\Gamma} \dfrac{\prod_{e\in E}d_e}{|Aut(\fG)|}(\mathfrak t_{\fG})_* ([\bM_{\fG}]^{\vir}).\]
Here, $[\bM_{\fG}]^{\vir} = \Delta^![\bM_{\Gamma^0}^\sim(D)\times \bM_{\Gamma^\infty}^{\bullet}(X,D)]^{\vir}$. 
This formula is not yet ready for explicit calculations in this paper. We describe the corresponding relative GW-invariants more explicitly as follows.

As before, we are given insertion vectors $\underline\alpha, \underline\varepsilon$. Without loss of generality, we assume that $\epsilon_1$ is the insertion corresponding to the unique negative marking. Other markings are either assigned to $\Gamma^0$ or $\Gamma^\infty$, and we split up insertions accordingly. We divide $\ualpha$ into $\ualpha_0$, $\ualpha_{\infty}$ and $\uepsilon$ into $\uepsilon_0$, $\uepsilon_{\infty}$ such that $\ualpha_0,\uepsilon_0$ correspond to markings assigned to $\Gamma^0$, and $\ualpha_\infty, \uepsilon_\infty$ correspond to markings assigned to $\Gamma^{\infty}$. Now the 
relative GW-invariant can be written as follows.
\begin{align*}
&\langle \uepsilon \mid \ualpha \rangle_\Gamma^{(X,D)} \\
=&\sum\limits_{\fG \in \mathcal B_\Gamma} \frac{\prod_{e\in E}d_e}{|Aut(E)|}\sum_{\eta}\langle\uepsilon_0\mid \ualpha_0\mid \ueta,\epsilon_1\rangle^{\sim}_{\Gamma^0}\langle \check{\ueta},\uepsilon_{\infty}\mid \ualpha_{\infty}\rangle^{\bullet, (X,D)}_{\Gamma^\infty},
\end{align*}
where $Aut(E)$ is the permutation group of the set $\{d_1,d_2,\ldots,d_{|E|}\}$, $\eta$ ranges over a basis of $H^*(D)^{\otimes |E|}$ ($\check{\ueta}$ is the dual basis of $\ueta$), and \[\langle\uepsilon_0\mid \ualpha_0\mid \ueta,\epsilon_1\rangle^{\sim}_{\Gamma^0},\quad \langle \check{\ueta},\uepsilon_{\infty}\mid \ualpha_{\infty}\rangle^{\bullet, (X,D)}_{\Gamma^\infty}\]
are rubber and relative invariants defined by integration over the virtual cycle of $\bM_{\Gamma^0}^\sim(D)$, $\bM_{\Gamma^\infty}^{\bullet}(X,D)$  separately. The subscript $\bullet$ indicates that relative GW-invariant of the pair $(X,D)$ is possibly disconnected, which can be computed by multiplying the individual connected invariants.
\end{ex}

\subsection{WDVV equation}
By introducing negative contact orders to relative GW-theory, WDVV equation for relative GW-theory can be expressed in a nicer way (see also \cite[Proposition 7.5]{FWY}). We summarize this formula in this subsection.
\begin{conv}
Let $\{T_k\}$ be a basis of $H^*(X)$ and $\{\bar T_k\}$ be a basis for $H^*(D)$. We use $\{T^k\}$ (resp. $\{\bar T^k\}$) to denote the dual basis of $\{T_k\}$ (resp. $\{\bar T_k\}$). Let $\HH_0=H^*(X)$ and $\HH_i=H^*(D)$ if $i\in \Z - \{0\}$. The space of insertions for relative GW-invariants is given by 
\[
\HH=\bigoplus\limits_{i\in\Z}\HH_i.
\]
The index here corresponds to contact orders in relative GW-theory. For $\alpha\in \HH_i$, we denote its natural image in $\HH$ by $[\alpha]_i$. Using $\{T_k\}$ and $\{\bar T_k\}$, we can construct a natural basis of $\HH$ by
\begin{align*}
\widetilde T_{0,k}&=[T_k]_0, \\
\widetilde T_{i,k}&=[\bar T_k]_i \text{ when } i\neq 0.
\end{align*}
Let 
\[
\begin{split}
([\alpha]_i,[\beta]_j) = 
\begin{cases}
0, &\text{if } i+j\neq 0,\\
\int_X \alpha\cup\beta, &\text{if } i=j=0, \\
\int_D \alpha\cup\beta, &\text{if } i+j=0, i,j\neq 0
\end{cases}
\end{split}
\]
be a natural pairing on $\HH$. Under the such paring, the dual basis of $\{\widetilde T_{i,k}\}$ is given by $\{\widetilde T_{-i}^k\}$ where
\begin{align*}
\widetilde T_{0}^k&=[T^k]_0, \\
\widetilde T_{i}^k&=[\bar T^k]_i \text{ when } i\neq 0.
\end{align*}
\end{conv}

Using the above convention, we rewrite 
\[\langle \uepsilon \mid \ualpha \rangle_{\Gamma}^{(X,D)}\]
as 
\[I_{\beta}([\epsilon_1]_{\mu_1},\cdots,[\epsilon_{\rho}]_{\mu_{\rho}},[\alpha_1]_0,\cdots,[\alpha_n]_0).\]
Here, we omit the relative pair $(X,D)$ for simplicity.

Now we are ready to state the WDVV equation for relative GW-theory.
\begin{prop}[WDVV]\label{prop:WDVV}
\begin{align*}
    &\sum I_{\beta_1}([\alpha_1]_{i_1}, [\alpha_2]_{i_2}, \prod\limits_{j\in S_1} [\alpha_j]_{i_j}, \widetilde T_{i,k}) I_{\beta_2}(\widetilde T_{-i}^k, [\alpha_3]_{i_3}, [\alpha_4]_{i_4}, \prod\limits_{j\in S_2} [\alpha_j]_{i_j}) \\
    =&\sum I_{\beta_1}([\alpha_1]_{i_1}, [\alpha_3]_{i_3}, \prod\limits_{j\in S_1} [\alpha_j]_{i_j}, \widetilde T_{i,k}) I_{\beta_2}(\widetilde T_{-i}^k, [\alpha_2]_{i_2}, [\alpha_4]_{i_4}, \prod\limits_{j\in S_2} [\alpha_j]_{i_j}),
\end{align*}
where each sum is over all $\beta_1+\beta_2=\beta$, all indices $i,k$ of basis, and $S_1, S_2$ disjoint sets with $S_1\cup S_2=\{5,\ldots,m\}$. Also, the $\prod$ symbol makes each factor as a separate insertion, instead of multiplying them up.
\end{prop}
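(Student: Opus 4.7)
The plan is to deduce the proposition from the classical WDVV equation for orbifold GW-theory of the root stacks $X_{D,r}$ of $X$ along $D$, then pass to the $r \to \infty$ limit; this is the approach of \cite[Proposition 7.5]{FWY}, and below I outline how it specializes to the notation used here. For $r$ much larger than the maximal $|i_j|$ and $|i|$ that can appear, the Chen--Ruan cohomology of $X_{D,r}$ decomposes as $H^*(X) \oplus \bigoplus_{k=1}^{r-1} H^*(D)_k$, where the subscript labels the twisted sector of age $k/r$. Under the dictionary from \cite{FWY}, an insertion $[\alpha]_i$ with $i=0$ corresponds to the untwisted sector; $[\alpha]_i$ with $0 < i < r$ corresponds to the sector of age $i/r$; and $[\alpha]_i$ with $-r < i < 0$ corresponds to the sector of age $(r+i)/r$. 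The orbifold Poincar\'e pairing pairs sector $k$ with sector $r-k$ via $\int_D\alpha\cup\beta$, matching the pairing on $\HH$ declared in the convention above.

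Next, one writes down the classical WDVV equation for the smooth proper Deligne--Mumford stack $X_{D,r}$, obtained by pulling back the linear equivalence $D(12|34)\sim D(13|24)$ on $\bM_{0,4}$ through the forgetful map from the moduli of twisted stable maps and then evaluating. This produces an identity of the shape stated in the proposition, but with orbifold invariants in place of relative ones and with $i$ summed over $\{0,1,\ldots,r-1\}$ rather than $\Z$. The key input from \cite{FWY} is that each orbifold invariant appearing on either side is polynomial in $r$, with constant-in-$r$ coefficient equal to the corresponding relative invariant $I_{\beta_j}(\cdots)$. Extracting this constant coefficient on both sides and reindexing $k \mapsto i$ (with $i = k$ for $k$ small and $i = k-r$ for $k$ close to $r$) yields the stated equation with the sum now running over all of $\Z$.

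The main difficulty is the bookkeeping around how polynomiality in $r$ propagates through the bilinear WDVV sum: one must verify that the product of two polynomials in $r$ appearing on each side contributes the correct constant term factor-by-factor, which hinges on the leading-order behaviour of the orbifold invariants being $r$-independent in the range where insertion contact orders are bounded. Polynomiality of orbifold GW-invariants of $X_{D,r}$ in $r$, proved in \cite{FWY}, together with the stabilization of the twisted-sector index set once $r > |D\cdot\beta|$, resolves this and completes the reduction from the orbifold WDVV to the displayed identity.
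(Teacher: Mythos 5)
Your proposal matches the paper's own justification: the paper gives no independent proof of Proposition~\ref{prop:WDVV}, but explicitly notes that the WDVV equation ``follows from our 1st definition'' of the cycle $\mathfrak c_\Gamma(X/D)$ as a limit of virtual cycles in orbifold GW-theory, which is precisely the root-stack argument of \cite[Proposition 7.5]{FWY} you outline (orbifold WDVV on $X_{D,r}$, the twisted-sector dictionary for contact orders, and large-$r$ stabilization, with the gerbe factor of $r$ in the twisted-sector pairing absorbed by the scaling attached to negative-contact markings). The one cosmetic slip is that in genus $0$ the suitably scaled orbifold invariants of $X_{D,r}$ are eventually \emph{constant} in $r$ rather than merely polynomial with a meaningful constant term --- the polynomiality phrasing belongs to the higher-genus theory --- which if anything simplifies the bookkeeping step you flag as the main difficulty.
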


\begin{rmk}
In ordinary GW-theory, the WDVV equation can be obtained by pulling back the cycle relation on $\bM_{0,4}$. As for the relative GW-theory, if we pull back the same cycle relation to the moduli space of relative stable map, we expect to get exactly the same relation as above with $i_n\geq 0$ for all $1\leq n\leq m$. This is a part of an ongoing project.
\end{rmk}

\section{Application of the WDVV equation}\label{sec:appl}
In this section, we focus on a relative pair $(X,D)$ such that $X$ is a smooth projective surface and $D$ is an ample divisor. We mainly consider the following two kinds of relative invariants:
\begin{eqnarray}
N_{\beta}^{X/D}: & = & I_{\beta}([1_D]_{D\cdot \beta},\underbrace{[\omega_X]_0,\cdots,[\omega_X]_0}_{n})\label{def:nfixed}\\
\bar{N}_{\beta}^{X/D}: & = & I_{\beta}([\omega_D]_{D\cdot \beta},\underbrace{[\omega_X]_0,\cdots,[\omega_X]_0}_{\bar{n}})\label{def:fixed}
\end{eqnarray}
where $\omega_{X}\in H^4(X), \omega_D\in H^2(D)$ are the Poincar\'e duals of point classes of $X$ and $D$, respectively. $1_D$ is the identity element in $H^*(D)$. By the virtual dimension \eqref{eqn:virdim}, we must have 
\[n=-\tanglog\cdot\beta,\quad \bar{n}=-\tanglog\cdot\beta-1,\]
where 
\[
\tanglog=K_X+D
\]
and $\tanglog\cdot\beta$ is the intersection number of divisor $\tanglog$ and curve class $\beta$. We will apply the WDVV equation to get a recursive formula for $N_{\beta}^{X/D}$ (or $\bar{N}_{\beta}^{X/D}$).

First of all, we want to show that $N_{\beta}^{X/D}$ and $\bar{N}_{\beta}^{X/D}$ are in fact closely related by the following lemma. As a side note, we remark that when $X=\mathbb{P}^2$ and $D=\lne$, Lemma \ref{lem:move2fix} already appeared in \cite[Lemma 2.7]{T} and \cite[Corollary 4.8]{FM}. \cite[Lemma 2.7]{T} also includes the case when $X=\mathbb{P}^2$ and $D=\conic$.

\begin{lem}\label{lem:move2fix}
If $-\tanglog\cdot\beta>0$, we have
\[N_{\beta}^{X/D}=(D\cdot\beta)\bar{N}_{\beta}^{X/D}.\]
\end{lem}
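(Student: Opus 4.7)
The plan is to compare $N_\beta^{X/D}$ and $\bar{N}_\beta^{X/D}$ via a forgetful map. Since $-\tanglog\cdot\beta>0$, the invariant $N_\beta^{X/D}$ carries $n:=-\tanglog\cdot\beta\ge 1$ interior markings, one more than $\bar{N}_\beta^{X/D}$. Let $\bM_m$ denote the moduli of genus-$0$ relative stable maps to $(X,D)$ of class $\beta$ with $m$ interior markings and one contact marking of order $d=D\cdot\beta$, and let $\pi\colon \bM_n\to \bM_{n-1}$ be the map forgetting the last interior marking. Neither invariant involves negative contact markings, so by the discussion of relative invariants without negative markings their integrands are supported on the usual relative virtual classes.

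The insertions $\prod_{i<n}\ev_{X,i}^*\omega_X$ and $\ev_D^*1_D=1$ in the integrand for $N_\beta^{X/D}$ are pulled back from $\bM_{n-1}$, so by the projection formula
\[
N_\beta^{X/D}=\int_{[\bM_{n-1}]^{\vir}}\pi_*\bigl(\ev_{X,n}^*\omega_X\cap[\bM_n]^{\vir}\bigr)\cdot\prod_{i<n}\ev_{X,i}^*\omega_X.
\]
The lemma therefore reduces to establishing the key cycle identity on $\bM_{n-1}$:
\[
\pi_*\bigl(\ev_{X,n}^*\omega_X\cap[\bM_n]^{\vir}\bigr)=d\cdot\ev_D^*\omega_D\cap[\bM_{n-1}]^{\vir}.
\]

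To prove this identity I would use that $X$ is a surface and $D$ is ample, so $D^2>0$ and $\omega_X=\tfrac{1}{D^2}D\cup D$ in $H^*(X)$; hence $\ev_{X,n}^*\omega_X=\tfrac{1}{D^2}(\ev_{X,n}^*D)^2$. Viewing $\bM_n$ as a partial universal curve over $\bM_{n-1}$ with universal map $f$ and contact section $\sigma$, the contact condition gives $f^*D=d\cdot[\sigma]$ on the main stratum and $f^*N_{D/X}=N_{\sigma/\bM_n}^{\otimes d}$ along $\sigma$. Using the self-intersection $[\sigma]^2=\sigma_*(-\psi_\sigma)$ together with $\pi\circ\sigma=\mathrm{id}$, the class $(d[\sigma])^2$ pushes down to $-d^2\psi_\sigma$ on $\bM_{n-1}$. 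On the other hand, the second contact relation pulls back via $\sigma$ to $D^2\,\ev_D^*\omega_D=-d\psi_\sigma$ (since $f\circ\sigma=\iota\circ\ev_D$ and $c_1(N_{D/X})=D^2\omega_D$). Combining these with the prefactor $\tfrac{1}{D^2}$ yields $\tfrac{d^2}{D^2}\cdot\tfrac{D^2}{d}=d$, producing $d\cdot\ev_D^*\omega_D$ as claimed.

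The main obstacle is handling the boundary contributions to $f^*D$ coming from expanded degenerations, where components of the domain may map into a bubble over $D$; one must show that these corrections vanish after push-forward against $[\bM_n]^{\vir}$. A cleaner alternative is to work in the orbifold model of \cite{FWY}, where the contact order $d$ appears as the order of an orbifold twist at the contact point, and the identity reduces to the standard orbifold divisor axiom applied to the twisted sector of order $d$.
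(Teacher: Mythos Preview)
Your approach via the forgetful map $\pi\colon\bM_n\to\bM_{n-1}$ is genuinely different from the paper's. The paper proves the lemma by the degeneration formula for the deformation to the normal cone of $D\subset X$: one specializes one of the $n$ interior point insertions onto the bubble $P=\Pp(N_{D/X}\oplus\sO)$, uses ampleness of $D$ to force the $P$-side graph to have a single vertex, and then applies a push-forward identity (from the rigidification lemma of \cite{MP} together with \cite{JPPZ}*{Theorem 2}) to reduce the $P$-side invariant to a three-pointed rubber number. Only one term survives, with coefficient $D\cdot\beta$.

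Your formal computation with $[\sigma]^2$ and $\psi$-classes is internally consistent \emph{provided} the divisorial relation $f^*D=d[\sigma]$ holds globally on the universal curve, but this is precisely where the argument breaks down, as you yourself flag. Over the boundary of $\bM_{n-1}$ the target expands, and the universal map to $X$ factors through the contraction of the expanded target; every domain component lying in a bubble then maps entirely into $D$, so $f^*D$ acquires vertical boundary contributions in addition to $d[\sigma]$. You assert that ``one must show that these corrections vanish after push-forward,'' but give no mechanism for this, and there is no general reason they should vanish without a further input of roughly the same strength as the lemma itself. The proposed ``cleaner alternative'' in the orbifold model is likewise only gestured at: there is no off-the-shelf orbifold divisor axiom that trades an untwisted point insertion $[\omega_X]_0$ for a twisted insertion $[\omega_D]_d$ at the contact marking, so this route would also require a real argument. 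As it stands the proposal identifies the right cycle identity but leaves the essential step unproved.
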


\begin{proof}
We will prove it via degeneration formula \cites{EGH,IP2,Jun2,LR}.
Using deformation to the normal cone, we obtain the degeneration 
\[X\rightsquigarrow X\cup_D \mathbb{P}(N_{D/X}\oplus \mathcal{O})\]
where $N_{D/X}$ is the normal bundle of $D$ in $X$. There are two obvious $\mathbb{C}^*$-invariant divisors $D_0$ and $D_{\infty}$ on the $\mathbb{P}^1$-bundle $P=\mathbb{P}(N_{D/X}\oplus \mathcal{O})$, whose normal bundle is given by $N_{D/X}$ and $N_{D/X}^{\vee}$ respectively. $X$ is glued to $P$ via $D_{\infty}$.

Now by the degeneration formula, $N_{\beta}^{X/D}$ can be determined from relative invariants of the pairs $(X,D)$ and $(P, D_0\cup D_{\infty})$.
We then distribute original $-\tanglog\cdot \beta$ point conditions in $N_{\beta}^{X/D}$ to $-\tanglog\cdot \beta-1$ point conditions on $X$ and one point condition on $P$.

Now each term in the degeneration formula can be written in the following form:
\begin{equation*}
C_{\Gamma_1,\Gamma_2}\langle 1_D\mid[\pt]\mid  \ueta \rangle^{\bullet,
 (P, D_0\cup D_{\infty})}_{\Gamma_1}\langle\check{\ueta}\mid\underbrace{[\omega_X]_0,\cdots,[\omega_X]_0}_{-\tanglog\cdot \beta-1}\rangle^{\bullet,(X,D)}_{\Gamma_2}
\end{equation*}
where $C_{\Gamma_1,\Gamma_2}$ is some constant determined by $\Gamma_1,\Gamma_2$ and $[\pt]$ is Poincar\'e dual to the point class of $P$.

Since $D$ is an ample divisor in $X$, for each effective curve class $\beta_D$ in $D$, we must have
$\int_{\beta_D} c_1(N_{D/X})\geq 0$. Let $v$ be a vertex in $\Gamma_1$ whose associated curve class is $\beta_v$. Then we must have 
\[D_0\cdot\beta_v-D_{\infty}\cdot \beta_v=\int_{\pi_*(\beta_v)}c_1(N_{D/X})\geq 0,\quad D_0\cdot\beta_v\geq 0,\quad D_{\infty}\cdot\beta_v\geq 0\]
where $\pi:P\rightarrow D$ be the projection map.
In the degeneration formula, we require that the gluing of $\Gamma_1$ and $\Gamma_2$ gives a connected graph. So $\beta_v$ must have an positive intersection number with $D_{\infty}$. This further implies that $D_0\cdot\beta_v>0$. But the intersection profile given on $D_0$ consists of one point with maximal tangency. This forces $\Gamma_1$ to have only one vertex. 

Let $p:\bM_{\Gamma_1}(P,D_0\cup D_{\infty})\rightarrow \bM_{0,m}(D,\pi_*(\beta_1))$ be the natural morphism induced by the projection $\pi$ and stabilization process. Here $m$ is the total number of interior and relative marking points
associated to $\Gamma_1$ and $\beta_1$ is the curve class. Combining the rigidification lemma \cite[Lemma 2]{MP} with \cite{JPPZ}*{Theorem 2},
we may deduce that
\[p_*\left(ev^*(D_{\infty})\cap [\bM_{\Gamma_1}(P,D_0\cup D_{\infty})]^{\vir}\right)=[\bM_{0,m}(D,\pi_*(\beta_1))]^{\vir}\]
where $ev$ is the evaluation map associated to the unique interior marking. We remark that the above equation only works for genus $0$.

Now by the projection formula and the string equation, we know that 
\[\langle 1_D\mid[\pt]\mid  \ueta \rangle^{\bullet,
 (P, D_0\cup D_{\infty})}_{\Gamma_1}\]
does not vanish only when $\pi_*(\beta_1)=0$ and there are three markings in total (one maps to $D_0$, one maps to $D_{\infty}$ and one interior marking). So the topological type of $\Gamma_2$ is also fixed in this case. We compute that the contribution of such triple $(\Gamma_1,\Gamma_2,I)$ is $(D\cdot\beta)\bar{N}_{\beta}^{X/D}$. 
\end{proof}

\subsection{A recursive formula}
Now we are ready to show that the WDVV equation can be used to solve $N_{\beta}^{X/D}$ up to some initial conditions. Let $H$ be any divisor, we have the following identity.
\begin{thm}\label{thm:recurMax}
For any smooth projective surface $X$, smooth ample divisor $D$ and curve class $\beta$ such that $-\tanglog\cdot \beta\geq 3$, we have the following recursive relation:
\begin{equation}\label{eqn:max}
    \begin{split}
        &(H\cdot D)\frac{N_\beta^{X/D}}{d}= \\
        & \sum\limits_{\beta_1+\beta_2=\beta} \left(d_1^2(H\cdot\beta_2){-\tanglog\cdot\beta-3\choose -\tanglog\cdot\beta_1 -1}+d_1d_2(H\cdot\beta_2){-\tanglog\cdot\beta-3\choose -\tanglog\cdot\beta_1-2} \right.\\
        &-\left. d_1^2(H\cdot\beta_1){-\tanglog\cdot\beta-3 \choose -\tanglog\cdot\beta_1} - d_1d_2(H\cdot \beta_1){-\tanglog\cdot\beta-3 \choose -\tanglog\cdot\beta_1-1}\right)\frac{N_{\beta_1}^{X/D}}{d_1}\frac{N_{\beta_2}^{X/D}}{d_2},
    \end{split}
\end{equation}
where $d=D\cdot\beta$ $d_1=D\cdot\beta_1$, $d_2=D\cdot\beta_2$, the summation takes over $\beta_1\neq 0$, $\beta_2\neq 0$ and $*\choose *$ are binomial coefficients.
\end{thm}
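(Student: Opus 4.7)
The plan is to apply the WDVV equation of Proposition~\ref{prop:WDVV} to a carefully chosen four-point insertion pattern, expand both sides, and collect the contributions. I would take the distinguished insertions to be $\alpha_1 = [1_D]_d$, $\alpha_2 = [\omega_X]_0$, $\alpha_3 = [H]_0$, $\alpha_4 = [\omega_X]_0$, together with $-\tanglog\cdot\beta - 3$ extra interior insertions of $[\omega_X]_0$ to be distributed between $S_1$ and $S_2$. A direct dimension count using \eqref{eqn:virdim} and the codimensions of these classes gives the per-factor balance $|S_1| + |S_2| = -\tanglog\cdot\beta - 3$, so that counting splittings produces the binomial coefficients $\binom{-\tanglog\cdot\beta - 3}{|S_1|}$ appearing in~\eqref{eqn:max}.

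For any effective decomposition $\beta_1 + \beta_2 = \beta$ with $\beta_1,\beta_2 \neq 0$, ampleness of $D$ gives $d_1, d_2 > 0$, and the balance of contact orders forces the middle basis class $\widetilde T_{i,k}$ to have contact $\pm d_2$; in particular $\widetilde T_{i,k} \in H^*(D)$. Since $D$ is a smooth curve, the relevant basis reduces (modulo $H^1(D)$, which may be handled analogously) to $\{1_D, \omega_D\}$, giving two cases per side of WDVV. Solving the per-factor dimension balance pins down $|S_1|$ in each case as one of $-\tanglog\cdot\beta_1$, $-\tanglog\cdot\beta_1 - 1$, $-\tanglog\cdot\beta_1 - 2$, reproducing the four binomial coefficients in~\eqref{eqn:max}.

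To simplify each summand I would use two tools: the divisor equation applied to $[H]_0$ (extracting a factor $(H\cdot\beta_i)$ and removing the insertion from whichever factor contains it), and the graph decomposition of Example~\ref{ex:1neg} applied to the factor containing the negative marking (rewriting it as a sum $\sum_\fG \frac{\prod_e d_e}{|\mathrm{Aut}(\fG)|}\langle\cdots\rangle^{\sim}\langle\cdots\rangle^{\bullet,(X,D)}$). Ampleness of $D$ restricts the admissible bipartite graphs $\fG$ by the same mechanism used in the proof of Lemma~\ref{lem:move2fix}, combining the rigidification lemma of \cite{MP}*{Lemma~2} with \cite{JPPZ}*{Theorem~2}: only the simplest topology survives, and the rubber and relative pieces combine to express the factor as a multiple of $\bar N_{\beta_1}^{X/D}$ with multiplicity $d_1^2$ (when the middle class is $1_D$) or $d_1 d_2$ (when it is $\omega_D$). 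Applying Lemma~\ref{lem:move2fix} to rewrite $\bar N_{\beta_i}^{X/D} = N_{\beta_i}^{X/D}/d_i$ assembles the right hand side of~\eqref{eqn:max} from the four bulk contributions, with the signs arising from the subtraction of the two sides of WDVV.

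The left hand side $(H\cdot D)\,N_\beta^{X/D}/d$ of~\eqref{eqn:max} arises from the boundary contributions to WDVV in which $\beta_1 = 0$ or $\beta_2 = 0$: the degenerate factor collapses to a constant-map/rubber invariant whose evaluation produces the intersection $H\cdot D = \int_D H|_D$, while the nondegenerate factor delivers $\bar N_\beta^{X/D} = N_\beta^{X/D}/d$. Moving this boundary term to the left hand side yields the stated recursion. The main technical obstacle will be the graph-sum evaluation in the third step above: the explicit bookkeeping of edge multiplicities $d_e$, rubber virtual classes, and the ampleness-based restriction on admissible $\fG$ must combine to produce precisely the multiplicities $d_1^2$ and $d_1 d_2$ demanded by the formula, and the boundary contribution that produces the coefficient $(H\cdot D)$ has to be identified cleanly within the same graph-theoretic bookkeeping.
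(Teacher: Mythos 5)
Your proposal is correct and follows essentially the same route as the paper's proof: the same WDVV specialization with one maximal-tangency insertion $[1_D]_{D\cdot\beta}$, one divisor insertion $[H]_0$, and $-\tanglog\cdot\beta-3$ distributed point insertions (your placement of $[H]_0$ at position $3$ rather than $2$ merely exchanges the two sides of the WDVV relation), the same ampleness argument forcing the middle class into $H^*(D)$ with negative contact order and reducing to $\bar T_k\in\{1_D,\omega_D\}$, the same one-negative-marking decomposition of Example~\ref{ex:1neg} restricted via the rigidification/\cite{JPPZ} mechanism of Lemma~\ref{lem:move2fix}, and the same use of Lemma~\ref{lem:move2fix} to convert $\bar N_{\beta_i}^{X/D}$ into $N_{\beta_i}^{X/D}/d_i$, with the $(H\cdot D)$ term arising exactly as in the paper from the degenerate rubber factor $\langle 1_D\mid H\mid 1_D\rangle^{\sim}=H\cdot D$. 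The multiplicities $d_1^2$ and $d_1d_2$ and the four binomial coefficients you predict are precisely those obtained in the paper's cases (I) and (II).
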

\begin{proof}
We will prove \eqref{eqn:max} using the WDVV equation in Proposition \ref{prop:WDVV}. We use the following insertions:
\[[\alpha_1]_{i_1}=[1_D]_{D\cdot \beta},\, [\alpha_2]_{i_2}=[H]_0,\,[\alpha_s]_{i_s}=[\omega_X]_0,\quad 3\leq s \leq (-\tanglog,\beta)+1.\]
Note that there is only one marking with positive contact order. The rest of the markings are interior (i.e., with contact orders equal to zero).

Let us consider the LHS of the WDVV equation at first:
\[\sum I_{\beta_1}([1_D]_{D\cdot \beta}, [H]_0, \prod\limits_{j\in S_1} [\omega_X]_0, \widetilde T_{i,k}) I_{\beta_2}(\widetilde T_{-i}^k, [\omega_X]_0, [\omega_X]_0, \prod\limits_{j\in S_2} [\omega_X]_0).\]

Suppose that $\widetilde T_{i,k}=[T_k]_0$. This insertion corresponds to an interior marking. Since the sum of all contact orders equals $D\cdot\beta_1$, we conclude that $D\cdot\beta_2=0$. The ampleness of $D$ further implies that $\beta_2=0$. Now since there are two point-insertions in $I_{\beta_2=0}$, we must have $I_{\beta_2}=0$.

Next, suppose that $\widetilde T_{i,k}=[\bar T_k]_i$, $i\neq 0$. Now the sum of all contact orders becomes $D\cdot\beta+i$ which equals $D\cdot \beta_1$. So $i=-D\cdot\beta_2$. Since $D$ is ample, we must have $i< 0$. The dimension constraint further requires $\bar T_k$ to be of even degree. Since $\dim_{\mathbb{C}}(D)=1$, we deduce that either $\bar T_k=1_D$ or $\bar T_k=\omega_D$. We discuss these two cases separately.

\begin{enumerate}[(I)]
\item If $\bar T_k=1_D$, then $\widetilde T_{-i}^k=[\omega_D]_{-i}$. In this case, it is easy to see that $I_{\beta_2}=\bar N_{\beta_2}^{X/D}$. As for $I_{\beta_1}$, it only contains one negative marking. So according to the discussion in Example \ref{ex:1neg}, we know that $I_{\beta_1}$ is a summation of the following terms:
\begin{equation}\label{eqn:oneneg}
\frac{\prod_{e\in E}d_e}{|Aut(E)|}\sum_{\eta}\langle1_D\mid \ualpha_0\mid \ueta,1_D\rangle^{\sim}_{\Gamma^0}\langle \check{\ueta}\mid \ualpha_{\infty}\rangle^{\bullet, (X,D)}_{\Gamma^\infty},
\end{equation}
where $\ualpha_0$ and $\ualpha_{\infty}$ is a division of insertions $[H]_0, \prod\limits_{j\in S_1} [\omega_X]_0$. Similar to the proof of Lemma \ref{lem:move2fix}, we can show that the rubber invariant
\[\langle1_D\mid \ualpha_0\mid \ueta,1_D\rangle^{\sim}_{\Gamma^0}\]
does not vanish only when the push-forward of curve class to the base is zero and there are totally three markings. 

\begin{enumerate}
\item If $\ueta$ is empty which only happens when $\beta_1=0$, $\ualpha_0=H$ and $S_1$ is empty. In this case, we have
\[I_{\beta_1=0}=\langle1_D\mid H\mid 1_D\rangle^{\sim}_{\Gamma^0}=H\cdot D,\qquad I_{\beta_2=\beta}=\bar N_{\beta}^{X/D}.\]
So the total contribution of case (a) to the LHS of the WDVV equation is
\[(H\cdot D)\frac{N_{\beta}^{X/D}}{(D\cdot\beta)}.\]
Here we have used Lemma \ref{lem:move2fix}.

\item If $\ueta$ is not empty, $\ueta$ must contain only one insertion and $\ualpha_0$ must be empty. By dimensional constraint, $\ueta=\omega_D$. Recall that the insertion $\ueta$ corresponds to a relative marking over $D_\infty$ with contact order $D\cdot \beta_1$. So there is only one edge $e$ with degree $d_e=D\cdot\beta_1$. From the above discussion, only the following type of invariants contributes to \eqref{eqn:oneneg}:
\begin{align*}&(D\cdot\beta_1)I_{\beta_1}([1_D]_{D\cdot \beta_1},[H]_0,\underbrace{[\omega_X]_0,\cdots,[\omega_X]_0}_{-\tanglog\cdot\beta_1})\\
=&(D\cdot\beta_1)(H\cdot \beta_1)N_{\beta_1}^{X/D}.
\end{align*}
Note that in the above expression, we have the freedom to choose $-\tanglog\cdot \beta_1$ markings from the original $-\tanglog\cdot \beta-3$ markings in order to insert $[\omega_X]_0$ (thus ${-\tanglog\cdot\beta-3\choose -\tanglog\cdot\beta_1}$ choices). In case (b), we also have
$I_{\beta_2}=\bar N_{\beta_2}^{X/D}$.

So the total contribution of case (b) to the LHS of the WDVV equation is
\[\sum_{\beta_1+\beta_2=\beta}(D\cdot\beta_1)^2(H\cdot \beta_1){-\tanglog\cdot\beta-3\choose -\tanglog\cdot\beta_1}\frac{N_{\beta_1}^{X/D}}{(D\cdot \beta_1)}\frac{N_{\beta_2}^{X/D}}{(D\cdot\beta_2)}.\]
\end{enumerate} 

Combining cases (a) and (b), we conclude that the total contribution of case (I) to the LHS of the WDVV equation is
\begin{align*}
(H\cdot D)\frac{N_{\beta}^{X/D}}{(D\cdot\beta)}+\sum_{\beta_1+\beta_2=\beta}&(D\cdot\beta_1)^2(H\cdot \beta_1){-\tanglog\cdot\beta-3\choose -\tanglog\cdot\beta_1}\\
&\cdot \frac{N_{\beta_1}^{X/D}}{(D\cdot \beta_1)}\frac{N_{\beta_2}^{X/D}}{(D\cdot\beta_2)}.
\end{align*}

\item If $\bar T_k=\omega_D$, then $\widetilde T_{-i}^k=[1_D]_{-i}$. Since the computation is similar to that of case (I), we omit the details here. The total contribution is \[\sum_{\beta_1+\beta_2=\beta}(H\cdot \beta_1){-\tanglog\cdot\beta-3\choose -\tanglog\cdot\beta_1-1}N_{\beta_1}^{X/D}N_{\beta_2}^{X/D}.\]
Combing cases (I) and (II), we conclude that the LHS of the WDVV equation is

\begin{align*}
&(H\cdot D)\frac{N_{\beta}^{X/D}}{(D\cdot\beta)}\\
&+\sum_{\beta_1+\beta_2=\beta} \left((D\cdot\beta_1)^2(H\cdot \beta_1){-\tanglog\cdot\beta-3\choose -\tanglog\cdot\beta_1}\frac{N_{\beta_1}^{X/D}}{(D\cdot \beta_1)}\frac{N_{\beta_2}^{X/D}}{(D\cdot\beta_2)}\right.\\
&\left.+(H\cdot \beta_1){-\tanglog\cdot\beta-3\choose -\tanglog\cdot\beta_1-1}N_{\beta_1}^{X/D}N_{\beta_2}^{X/D}\right).
\end{align*}

\end{enumerate}

Similarly, the RHS of the WDVV equation is
\[
\begin{split}
    \sum_{\beta_1+\beta_2=\beta}&\left((D\cdot\beta_1)^2(H\cdot \beta_2){-\tanglog\cdot\beta-3\choose -\tanglog\cdot\beta_1-1}\frac{N_{\beta_1}^{X/D}}{(D\cdot \beta_1)}\frac{N_{\beta_2}^{X/D}}{(D\cdot\beta_2)}\right.\\
& \left.+(H\cdot \beta_2){-\tanglog\cdot\beta-3\choose -\tanglog\cdot\beta_1-2}N_{\beta_1}^{X/D}N_{\beta_2}^{X/D}\right).
\end{split}
\]
Now Theorem \ref{thm:recurMax} directly follows.
\end{proof}

When $D$ is not ample, the WDVV equation still holds. But the computation of $N_{\beta}^{X/D}$ involves some relative invariants that are not directly accessible. Therefore, it will not produce a recursive formula only involving $N_{\beta}^{X/D}$ or $\bar N_{\beta}^{X/D}$.

In Theorem \ref{thm:recurMax}, if we choose $H$ to be an ample divisor, we can further conclude the following.
\begin{cor}
The set of invariants
\[
\{N_{\beta}^{X/D}\mid -\tanglog\cdot\beta\geq 3\}
\]
can be recursively determined by the set of invariants 
\[
\{N_{\beta}^{X/D}\mid -\tanglog\cdot\beta< 3\}.
\]
\end{cor}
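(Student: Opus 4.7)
The plan is to apply Theorem \ref{thm:recurMax} with a fixed choice of ample divisor $H$ and induct on the intersection number $H\cdot\beta$. First I would observe that since both $H$ and $D$ are ample, the coefficient $H\cdot D$ on the left-hand side of \eqref{eqn:max} is strictly positive, so one may divide by it and solve for $N_\beta^{X/D}$ as an explicit linear combination of products $\frac{N_{\beta_1}^{X/D}}{d_1}\frac{N_{\beta_2}^{X/D}}{d_2}$ indexed by decompositions $\beta_1+\beta_2=\beta$ with $\beta_1,\beta_2\neq 0$. This is the equation that the recursion will iterate.

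Next I would set up the induction on $H\cdot\beta$. Only effective classes $\beta_i$ contribute nontrivially to the sum, since the genus-zero invariant $N_{\beta_i}^{X/D}$ vanishes otherwise, and ampleness of $H$ forces $H\cdot\beta_i>0$ for every nonzero effective class. Consequently $0<H\cdot\beta_i<H\cdot\beta$ for each term appearing on the right-hand side. By the inductive hypothesis, every $N_{\beta_i}^{X/D}$ on the right is already known: if $-\tanglog\cdot\beta_i<3$ it lies in the initial data, and if $-\tanglog\cdot\beta_i\geq 3$ it has been determined at an earlier stage of the induction by the same procedure. This closes the induction and determines $N_\beta^{X/D}$ for every $\beta$ with $-\tanglog\cdot\beta\geq 3$.

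The main subtlety, and the reason the induction must be performed on $H\cdot\beta$ rather than on $-\tanglog\cdot\beta$, is that the binomial ${-\tanglog\cdot\beta-3\choose -\tanglog\cdot\beta_1}$ in \eqref{eqn:max} is nonzero when $-\tanglog\cdot\beta_1=0$, in which case $-\tanglog\cdot\beta_2=-\tanglog\cdot\beta$ and one cannot exploit any decrease in the $\tanglog$-degree. Ampleness of $H$ sidesteps this obstruction because it guarantees a strict decrease in $H$-degree for every effective summand, independently of how $\tanglog$ pairs against the $\beta_i$. Once this observation is in place, no additional work is needed: the recursion from Theorem \ref{thm:recurMax} does all the computation.
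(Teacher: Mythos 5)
Your proposal is correct and matches the paper's intended argument: the paper derives the corollary directly from Theorem \ref{thm:recurMax} by taking $H$ ample (it gives no further proof), and your write-up simply makes explicit the bookkeeping this requires --- dividing by $H\cdot D>0$, restricting to effective $\beta_1,\beta_2$ with $0<H\cdot\beta_i<H\cdot\beta$, and inducting on the positive integer $H\cdot\beta$. Your observation that one must induct on $H$-degree rather than on $-\tanglog\cdot\beta$ (since the binomial $\binom{-\tanglog\cdot\beta-3}{-\tanglog\cdot\beta_1}$ survives when $-\tanglog\cdot\beta_1=0$) is a correct and worthwhile clarification of why the ampleness of $H$ is invoked.
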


\subsection{Some examples}
In this subsection, we show that for pairs $(\mathbb{P}^2,\lne)$ and $(\mathbb{P}^2,\conic)$, Theorem \ref{thm:recurMax} can be used to calculate explicit formulae of $N_{\beta}^{X/D}$. The 1st several $N_{\beta}^{X/D}$ will be computed by Gathmann's program GROWI \cite{GROWI}.

Let us set $H=D$ in \eqref{eqn:max}. And using Lemma \ref{lem:move2fix}, we get
\begin{equation}\label{eqn:simprec}
\begin{split}
        (D\cdot D)\bar{N}_{\beta}^{X/D}=\sum\limits_{\beta_1+\beta_2=\beta} &\left((D\cdot\beta_1)(D\cdot\beta_2)^2{-\tanglog\cdot\beta-3\choose -\tanglog\cdot\beta_1-2}\right.\\
        & \left.-(D\cdot\beta_1)^3{-\tanglog\cdot\beta-3 \choose -\tanglog\cdot\beta_1} \right)\bar{N}_{\beta_1}^{X/D}\bar{N}_{\beta_2}^{X/D}.
    \end{split}
\end{equation}

For simplicity, relative invariants $\bar{N}_{\beta}^{X/D}$ for pairs $(\mathbb{P}^2,\lne)$ and $(\mathbb{P}^2,\conic)$ are distinguished by adding a superscript $L$ or $C$. The curve class $\beta$ is replaced by its corresponding degree $d\in \mathbb{Z}_{>0}$. For example, $\bar{N}_d^L$ stands for relative invariant $\bar{N}_{\beta}^{\mathbb{P}^2/\lne}$ with degree $d$.

Let
\begin{eqnarray*}
F^L &=& \sum_{d=1}^{\infty}\frac{\bar{N}_d^L}{(2d-1)!}q^{2d},\\
F^C &=& \sum_{d=1}^{\infty}\frac{16\bar{N}_d^C}{(d-1)!}q^d
\end{eqnarray*}
be two generating series, which encode information of $\bar{N}_d^L$, $\bar{N}_d^C$, respectively.

It is easy to deduce from \eqref{eqn:simprec} that $F^L$ and $F^C$ both satisfy the same differential equation:
\begin{equation}\label{eqn:key}
\Big(q\frac{d}{dq}-1\Big)\Big(q\frac{d}{dq}-2\Big)F^{\bullet}=\frac{\big(q\frac{d}{dq}\big)^2F^{\bullet}}{4}\times\Big(q\frac{d}{dq}-1\Big)F^{\bullet}
\end{equation}
where $\bullet$ stands for $L$ or $C$.

In the rest of this subsection, we will solve differential equation \eqref{eqn:key} and get closed formulae for $\bar{N}_d^L$ and $\bar{N}_d^C$. 

Let 
\[A^{\bullet}=\Big(\frac{d}{dq}-\frac{1}{q}\Big)F^{\bullet}.\]
Our main theorem can be stated as follows:
\begin{thm}\label{thm:closefm}
\begin{eqnarray*}
A^L\exp\Bigg(W\Big(\frac{A^L}{2\sqrt{-1}}\Big)+W\Big(-\frac{A^L}{2\sqrt{-1}}\Big)\Bigg) & = & q,\\
A^{C}\exp\Bigg(2W\Big(-\frac{A^{C}}{8}\Big)\Bigg) & = & 16q,
\end{eqnarray*}
where 
\begin{equation}\label{eqn:lamb}
W(x):=\sum_{k=1}^{\infty}\frac{k^{k-1}}{k!}x^k
\end{equation}
is the famous Lambert W-function.
\end{thm}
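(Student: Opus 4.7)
The plan is to integrate \eqref{eqn:key} once to obtain a first integral, thereby reducing it to an ODE involving only $A^\bullet$ and $q$ (not $F^\bullet$), and then verify that the two claimed Lambert--W formulas satisfy this reduced ODE with the correct initial data. Uniqueness of the formal power-series solution then completes the argument.

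Setting $B := DF^\bullet$ with $D = q\,d/dq$, the relation $(D-1)F^\bullet = qA^\bullet$ gives $F^\bullet = B - qA^\bullet$, so the left-hand side of \eqref{eqn:key} rewrites as $(D-2)(qA^\bullet) = q(DA^\bullet - A^\bullet)$ and the right-hand side is $(DB/4)\cdot qA^\bullet$. Dividing by $qA^\bullet$ produces $D\log A^\bullet - 1 = DB/4$, equivalently $DB = 4\,D\log(A^\bullet/q)$, which integrates to $B = 4\log(A^\bullet/q) + C_0$; the constant $C_0$ is fixed by $B(0)=0$ together with the leading coefficient $a_1^\bullet$ of $A^\bullet$, giving $C_0 = -4\log a_1^\bullet$. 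Combining with $D(B - qA^\bullet) = B$ yields $DA^\bullet = A^\bullet(4 + B + qA^\bullet)/(4 - qA^\bullet)$, a differential-algebraic system in $A^\bullet$ alone.

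For the conic case, the initial values $\bar{N}^C_1 = \bar{N}^C_2 = 1$ fix $a_1^C = 16$ (hence $C_0^C = -4\log 16$). Setting $y = W(-A^C/8)$ converts the theorem's identity into the equivalent closed form $A^C = 4\,W(2q)^2/q$. Using the defining relation $W(2q) = 2q\,e^{W(2q)}$ (which implies $DW(2q) = W(2q)/(1 - W(2q))$), I would compute directly that $B^C = 8\,W(2q)$, verify that $(4 + B^C + qA^C)$ and $(4 - qA^C)$ factor as $4(1 + W(2q))^2$ and $4(1-W(2q))(1+W(2q))$, and check that $DA^C$ computed from $A^C = 4\,W(2q)^2/q$ equals $A^C(1+W(2q))/(1-W(2q))$; both sides of the reduced ODE then agree.

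For the line case, $\bar{N}^L_1 = 1$ gives $a_1^L = 1$ and $C_0^L = 0$ (parity of $F^L$ in $q$ also forces $a_2^L = 0$). Introducing $\xi := W(A^L/(2\sqrt{-1}))$ and $\eta := W(-A^L/(2\sqrt{-1}))$, the defining relations $\xi\,e^{-\xi} = A^L/(2\sqrt{-1})$ and $\eta\,e^{-\eta} = -A^L/(2\sqrt{-1})$ multiply to $\xi\eta\,e^{-(\xi+\eta)} = (A^L)^2/4$. The ansatz $A^L e^{\xi+\eta} = q$ then yields the identities $\xi\eta = qA^L/4$, $B^L = -4(\xi+\eta)$, and the key factorizations $4 + B^L + qA^L = 4(1-\xi)(1-\eta)$, $4 - qA^L = 4(1-\xi\eta)$. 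A chain-rule calculation on the defining equations gives $D\xi = (DA^L/A^L)\cdot \xi/(1-\xi)$ and analogously for $\eta$; differentiating $A^L e^{\xi+\eta} = q$ gives $DA^L/A^L = 1 - (D\xi + D\eta)$; combining these produces $DA^L/A^L = (1-\xi)(1-\eta)/(1-\xi\eta)$, matching the reduced ODE. The main obstacle will be managing the symmetric Lambert--W bookkeeping for $\xi$ and $\eta$ in the line case, so that the sum $\xi+\eta$ and product $\xi\eta$ collapse cleanly into the factorizations above; the conic case, involving only a single instance of $W$, is a more direct single-variable computation.
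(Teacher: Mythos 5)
Your reduction is correct and, up to the first integral, coincides with the paper's: your $B=DF^\bullet$ (with $D=q\,\frac{d}{dq}$) is four times the paper's $B^\bullet=\tfrac14 DF^\bullet$, and your relation $B=4\log(A^\bullet/q)+C_0$ is exactly the paper's $A^\bullet=M^\bullet q\exp(B^\bullet)$ with $M^\bullet=a_1^\bullet$. From there the routes genuinely differ. The paper changes variables to $f^\bullet(A^\bullet)=B^\bullet$, obtains the ODE $A\,\frac{df}{dA}=\bigl(f+\tfrac{A^2}{2M}e^{-f}\bigr)/\bigl(1+f+\tfrac{A^2}{4M}e^{-f}\bigr)$, and classifies \emph{all} its formal solutions in Lemma \ref{lem:ode} as $-W(c_1x)-W(c_2x)$ with $c_1c_2=\tfrac14$, then fixes the constants by initial data. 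You instead stay in the $q$-variable, derive $DA=A(4+B+qA)/(4-qA)$, and verify the two specific closed forms directly. The identities you list all check out: $A^C=4W(2q)^2/q$ is indeed equivalent to the stated conic identity, $B^C=8W(2q)$, $4+B^C+qA^C=4(1+W(2q))^2$, $4-qA^C=4(1-W(2q))(1+W(2q))$, $D\log A^C=(1+W(2q))/(1-W(2q))$; and in the line case $\xi\eta=qA^L/4$, $B^L=-4(\xi+\eta)$, $4+B^L+qA^L=4(1-\xi)(1-\eta)$, $D\log A^L=(1-\xi)(1-\eta)/(1-\xi\eta)$. So your endgame replaces the classification lemma by a direct check-plus-uniqueness, which is legitimate and somewhat more elementary.

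The gap is in the uniqueness bookkeeping, and it bites in the conic case. Extracting the coefficient of $q^n$ from $DA(4-qA)=A(4+B+qA)$, using $[q^{n-1}]B=4a_n/a_1+(\text{terms in }a_{\le n-1})$, gives $4(n-2)a_n=(\text{polynomial in }a_1,\dots,a_{n-1})$: the constraint is vacuous at $n=2$, so your reduced system has a \emph{two}-parameter family of formal solutions, parametrized by $a_1$ and $a_2$ (matching the paper's two constants $M^\bullet$ and $c_1+c_2$; note the paper likewise needs ``some initial values of $N_d^\bullet$'' beyond $M^\bullet$ to pin $c_1+c_2$). For the line you correctly fix both parameters ($a_1^L=1$, and $a_2^L=0$ by parity), but for the conic you fix only $a_1^C=16$; that does not single out $A^C=4W(2q)^2/q=16q+64q^2+\cdots$, since for instance the solution with $a_1=16$, $a_2=0$ (the member $c_1=-c_2$ of the paper's family, rescaled) has the same leading coefficient. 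You must also match $a_2^C=64$. The data you already quote suffice for this: the order-$q$ coefficient of your first integral reads $[q]B^C=16\bar N_1^C=4a_2^C/a_1^C$, so $\bar N_1^C=1$ forces $a_2^C=64$ (equivalently, verify $\bar N_3^C=4$ with GROWI). With that supplied, and with the $4(n-2)a_n$ recursion written down to justify uniqueness for $n\ge 3$, your proof closes.
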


Using Lagrangian Inversion Theorem, it is easy to deduce that 

\begin{cor}\label{cor:explfm}
\begin{eqnarray*}
\bar{N}_{d+1}^L & = & \frac{(2d)!}{2d+1}\sum_{s=1}^d\sum_{a_1+a_2+\cdots a_s=d, \atop a_i>0} \frac{(-1)^{d-s}(2d+1)^s}{s!}\prod_{i=1}^s \frac{a_i^{2a_i-1}}{(2a_i)!},\\
\bar{N}_{d+2}^C & = & \frac{d!}{d+1}\sum_{s=1}^d\sum_{a_1+a_2+\cdots a_s=d, \atop a_i>0}\frac{(-1)^{d-s}2^{d+s}(d+1)^s}{s!}\prod_{i=1}^s\frac{a_i^{a_i-1}}{a_i!}.
\end{eqnarray*}
\end{cor}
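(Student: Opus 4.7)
The plan is to apply the Lagrange inversion formula to the two implicit equations in Theorem~\ref{thm:closefm} and then convert back from $A^\bullet$ to $\bar N^\bullet$ by integration. The defining relation $A^\bullet = (d/dq-1/q)F^\bullet = q(F^\bullet/q)'$ implies that, writing $A^\bullet = \sum_n a_n q^n$, one has $F^\bullet = \sum_n (a_n/n)\,q^{n+1}$. Matching against the defining series of $F^L$ and $F^C$ yields
\[
\bar N^L_{d+1} = (2d)!\cdot [q^{2d+1}]A^L, \qquad \bar N^C_{d+2} = \frac{d!}{16}\cdot [q^{d+1}]A^C,
\]
so the corollary reduces to computing these two coefficients.

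Next I would put each equation of Theorem~\ref{thm:closefm} into a Lagrange-ready form $A=x\cdot \psi(A)$: for $L$ take $x=q$ and $\psi(A)=\exp(-W(A/(2i))-W(-A/(2i)))$, and for $C$ take $x=16q$ and $\psi(A)=\exp(-2W(-A/8))$. Lagrange inversion then gives $[x^n]A = (1/n)[A^{n-1}]\psi(A)^n$, which I would evaluate by a direct series expansion. Using the defining series $W(y)=\sum_{k\ge 1}(k^{k-1}/k!)\,y^k$, I expand the exponential by $e^u=\sum_s u^s/s!$ (producing the outer sum over $s$) and then expand each $s$th power of the inner Lambert series as a multinomial over tuples $(a_1,\ldots,a_s)$ with $a_i\ge 1$ (producing the inner partition sum). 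Extracting the relevant coefficient of $A$ forces $\sum a_i = d$ in both cases.

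For the $L$-case, the cancellation of odd terms in $W(y)+W(-y) = 2\sum_m ((2m)^{2m-1}/(2m)!)\,y^{2m}$ restricts to even powers of $A$, and the elementary identity $(2a)^{2a-1}=2^{2a-1}a^{2a-1}$ converts the naive coefficient $(2a_i)^{2a_i-1}/(2a_i)!$ into $a_i^{2a_i-1}/(2a_i)!$ after pulling out a global factor $2^{2d-s}$. For the $C$-case the expansion of $W(-A/8)^s$ directly produces the product $\prod_i a_i^{a_i-1}/a_i!$ together with an overall $(-1)^d/8^d$. The main work is then a careful audit of the scalar prefactors — powers of $(2i)^{2d}=(-4)^d$ (for $L$), of $16^{d+1}/8^d$ (for $C$), and the factors of $2$ introduced by each of the above operations, together with the Lagrange $1/n$ and the conversion constants $(2d)!$ and $d!/16$ from the first paragraph — to verify that they collapse to exactly $(-1)^{d-s}(2d+1)^s/s!$ in the $L$-case and $(-1)^{d-s}2^{d+s}(d+1)^s/s!$ in the $C$-case. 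Once this bookkeeping is done, Corollary~\ref{cor:explfm} drops out.
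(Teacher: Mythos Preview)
Your proposal is correct and is exactly the approach the paper intends: the paper's entire proof of Corollary~\ref{cor:explfm} is the single sentence ``Using Lagrangian Inversion Theorem, it is easy to deduce that,'' and your write-up supplies precisely the missing details (the conversion $\bar N^\bullet \leftrightarrow [q^n]A^\bullet$, the form $A=x\psi(A)$, and the bookkeeping of the constants). The scalar audit checks out in both cases.
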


\begin{rmk}
In \cite{RW}, the authors also deduce the same formula for $\bar{N}_d^L$ using a totally different method. We will adopt a purely combinatorial approach which can be used to deduce $\bar{N}_d^L$ and $\bar{N}_d^C$ in a unified way.
\end{rmk}

\begin{proof}[Proof of Theorem \ref{thm:closefm}]
Let 
\[B^{\bullet}:=\frac{q\frac{d}{dq}}{4}F^{\bullet}.\]
A simple calculation can show that \eqref{eqn:key} is equivalent to 
\[q\frac{d}{dq}\Big\{log\Big(\frac{A^{\bullet}}{q}\Big)-B^{\bullet}\Big\}=0.\]
So we must have 
\begin{equation*}
A^{\bullet}=M^{\bullet}q\exp(B^{\bullet}),
\end{equation*}
where $M^{L}$, $M^{C}$ are two constants that can be determined by the initial values of $\bar{N}_d^{L}$, $\bar{N}_d^{C}$ respectively. By using Gathmann's program \cite{GROWI}, we can compute that 
\[\bar{N}_1^L=1,\qquad \bar{N}_2^C=1.\]
Then we deduce that
\[M^{L}=1,\qquad M^{C}=16.\]
Now we want to find two functions $f^{L}$, $f^{C}$ such that 
\[f^{\bullet}(A^{\bullet})=B^{\bullet}\]
Taking derivative on both sides with respect to $q$, we get
\[\frac{df^{\bullet}}{dA^{\bullet}}=\frac{
\frac{dB^{\bullet}}{dq}}{\frac{dA^{\bullet}}{dq}}.\]
Using \eqref{eqn:key}, it is easy to check that
\[\frac{A^{\bullet}
\frac{dB^{\bullet}}{dq}}{\frac{dA^{\bullet}}{dq}}=\frac{B^{\bullet}+\frac{qA^{\bullet}}{2}}{1+B^{\bullet}+\frac{qA^{\bullet}}{4}}.\]
Replacing $q$ by $A^{\bullet}$ and $B^{\bullet}$, we have
\[A^{\bullet}\frac{df^{\bullet}}{dA^{\bullet}}=\frac{f^{\bullet}+\frac{(A^{\bullet})^2}{2M^{\bullet}}e^{-f^{\bullet}}}{1+f^{\bullet}+\frac{(A^{\bullet})^2}{4M^{\bullet}}e^{-f^{\bullet}}}\]
Now using Lemma \ref{lem:ode} below and some initial values of $N_d^{\bullet}$, we deduce that
\begin{eqnarray*}
f^{L}(A^{L}) & = & -W\Big(\frac{A^L}{2\sqrt{-1}}\Big)-W\Big(-\frac{A^L}{2\sqrt{-1}}\Big)\\
f^{C}(A^{C}) & = & -2W\Big(-\frac{A^{C}}{8}\Big)
\end{eqnarray*}

The proof of Theorem \ref{thm:closefm} is complete.
\end{proof}

\begin{lem}\label{lem:ode}
Let 
\[f(x)=\sum_{k=0}^{\infty}a_kx^k\]
be a formal power series, which satisfies the following ordinary differential equation:
\[x\frac{d}{dx}f=\frac{f+\frac{x^2}{2}e^{-f}}{1+f+\frac{x^2}{4}e^{-f}}.\]
Then $f$ can be written as
\[-W(c_1x)-W(c_2x),\]
where $W(x)$
is the Lambert W function defined in \eqref{eqn:lamb}, and $c_1$, $c_2$ are two complex numbers that satisfy $c_1c_2=\frac{1}{4}$.
\end{lem}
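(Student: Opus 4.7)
The plan is to proceed in three stages: first verify by direct substitution that $g(x) := -W(c_1 x) - W(c_2 x)$ solves the ODE whenever $c_1 c_2 = 1/4$; then show the ODE has at most one formal power-series solution with prescribed $f'(0)$; and finally match $f'(0)$ by an appropriate choice of $c_1, c_2$.

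For the verification, set $u := W(c_1 x)$ and $v := W(c_2 x)$. The series in \eqref{eqn:lamb} is the formal inverse of $y \mapsto y e^{-y}$, so $W(y) e^{-W(y)} = y$. Differentiating this identity and specializing to $y = c_i x$ yields $x u' = u/(1-u)$ and $x v' = v/(1-v)$, while the functional equation itself gives $e^{u} = u/(c_1 x)$ and $e^{v} = v/(c_2 x)$. Using $c_1 c_2 = 1/4$ one computes
\[
\tfrac{x^2}{4} e^{-g} \;=\; \tfrac{x^2}{4}\, e^u\, e^v \;=\; \tfrac{uv}{4 c_1 c_2} \;=\; uv,
\]
so that $g + \tfrac{x^2}{2} e^{-g} = -u - v + 2uv$ and $1 + g + \tfrac{x^2}{4} e^{-g} = (1-u)(1-v)$. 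The right-hand side of the ODE therefore equals $\frac{-u - v + 2uv}{(1-u)(1-v)}$, which after combining fractions coincides with $x g' = -\frac{u}{1-u} - \frac{v}{1-v}$.

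For uniqueness, write $R(x,f) := \frac{f + \frac{x^2}{2} e^{-f}}{1 + f + \frac{x^2}{4} e^{-f}} \in \C[[x,f]]$ (the denominator has constant term $1$, hence is invertible). Evaluating the ODE at $x = 0$ forces $0 = a_0/(1+a_0)$, so $a_0 = 0$. The key observation is $\partial_f R(0,0) = 1$. Since $f(0)=0$, for any $k \ge 2$ the coefficient $[x^n] f(x)^k$ involves only $a_j$ with $j \le n-k+1 < n$ (a monomial of total weight $n$ with $k$ positive summands has maximal index $n-k+1$). Hence $a_n$ enters $[x^n] R(x, f(x))$ only through the linear term $\partial_f R(x,0)\, f(x)$, with coefficient $\partial_f R(0,0) = 1$. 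Equating $[x^n]$ on both sides of the ODE thus gives a recursion of shape $(n-1) a_n = P_n(a_1, \ldots, a_{n-1})$: vacuous for $n = 1$, and uniquely determining $a_n$ for every $n \ge 2$.

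To finish, note that $W(y) = y + O(y^2)$ gives $g'(0) = -(c_1 + c_2)$. Given $a_1 = f'(0)$, choose $c_1, c_2 \in \C$ to be the two roots of $t^2 + a_1 t + \tfrac14 = 0$; they exist in $\C$ and satisfy $c_1 + c_2 = -a_1$, $c_1 c_2 = 1/4$, so $g$ solves the ODE with $g'(0) = a_1 = f'(0)$, and the uniqueness step forces $f = g$. The main obstacle is purely notational---bookkeeping with the paper's sign convention for $W$ and the various $\pm$'s in the ODE---while the single conceptual ingredient is the identity $\partial_f R(0,0) = 1$, which is precisely what makes $a_1$ the unique free parameter of the equation.
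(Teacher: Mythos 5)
Your proposal is correct and follows essentially the same strategy as the paper: uniqueness of the formal solution once $a_1$ is fixed, verification that $-W(c_1x)-W(c_2x)$ solves the ODE (using $We^{-W}=x$ and the key identity $\tfrac{x^2}{4}e^{u+v}=uv$ when $c_1c_2=\tfrac14$), and matching the free parameter via $c_1+c_2=-a_1$. The only differences are cosmetic: you verify by direct substitution with $xW'=W/(1-W)$ and the factorization $1-u-v+uv=(1-u)(1-v)$, where the paper instead rewrites the ODE in the exact form $x\frac{d}{dx}\bigl(f+\frac{f^2}{2}-\frac{x^2}{4}e^{-f}\bigr)=f$, and you spell out the uniqueness recursion (via $\partial_f R(0,0)=1$) that the paper dispatches in one line.
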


\begin{proof}
By comparing the coefficients on both sides of the ordinary differential equation, one can see that $a_0=0$ and all the other coefficients can be uniquely determined by $a_1$. Since the coefficient $-c_1-c_2$ in $-W(c_1x)-W(c_2x)$ can be arbitrary, we only need to show that $-W(c_1x)-W(c_2x)$ satisfies the above equation. Then the uniqueness implies that they are all the solutions.

In order to show that 
\[f(x)=-W(c_1x)-W(c_2x),\qquad c_1c_2=\frac{1}{4}\]
is a solution. It is equivalent to check that
\[x\frac{d}{dx}\Big(f+\frac{f^2}{2}-\frac{x^2}{4}e^{-f}\Big)=f.\]
Using the well-known fact that 
\begin{equation}\label{eqn:Lambinv}
We^{-W}=x    
\end{equation}
and $c_1c_2=\frac{1}{4}$, we can show that
\[f+\frac{f^2}{2}-\frac{x^2}{4}e^{-f}=-W(c_1x)+\frac{W(c_1x)^2}{2}-W(c_2x)+\frac{W(c_2x)^2}{2}\]
Now using \eqref{eqn:Lambinv}, it is easy to see that
\[x\frac{d}{dx}\Big(-W(c_ix)+\frac{W(c_ix)^2}{2}\Big)=-W(c_ix),\qquad i=1,2.\]
The Lemma \ref{lem:ode} then follows.
\end{proof}

\newpage
\bibliography{universal-BIB}
\bibliographystyle{amsxport}
\end{document}